\theoremstyle{plain} 
\newtheorem{theorem}{Theorem}
\newtheorem{corollary}[theorem]{Corollary}
\newtheorem{proposition}[theorem]{Proposition}
\theoremstyle{definition} 
\theoremstyle{definition} 
\newtheorem*{ex*}{Example}
\theoremstyle{remark} 
\theoremstyle{remark} 
\newtheorem{remark}[theorem]{Remark}
\newtheorem*{remark*}{Remark}
\newcommand{\sign}{\operatorname{sign}}
\newcommand{\al}{\alpha}
\newcommand{\ga}{\gamma}
\newcommand{\be}{\beta}
\newcommand{\vpi}{\varphi}
\renewcommand{\th}{\theta}
\newcommand{\E}{\operatorname{\mathsf{E}}}
\newcommand{\CC}{\mathbb{C}}
\newcommand{\R}{\mathbb{R}}
\newcommand{\J}{\mathcal{J}}
\newcommand{\vp}{\varepsilon}
\renewcommand{\le}{\leqslant}
\renewcommand{\Re}{\operatorname{\mathsf{Re}}}
\renewcommand{\Re}{\operatorname{\mathfrak{Re}}}
\begin{document}

\begin{frontmatter}

\title{Characteristic function of the positive part of a random variable and related results, with applications
}
\runtitle{Characteristic function of the positive part of a random variable
}
\author{Iosif Pinelis}
\runauthor{Iosif Pinelis}

\address{Department of Mathematical Sciences\\
Michigan Technological University\\
Houghton, Michigan 49931, USA\\
E-mail: ipinelis@mtu.edu}

\begin{abstract}
Let $X$ be an arbitrary real-valued random variable (r.v.), with the characteristic function (c.f.) $f$.  
Integral expressions for the c.f.\ of the r.v.'s $\max(0,X)$ 
in terms of $f$ 
are given, as well as other related results. 
Applications to stock options and  random walks  are presented. 
In particular, a more explicit and compact form of Spitzer's identity is obtained. 
\end{abstract}

\begin{keyword}
characteristic function \sep positive part \sep random variable \sep Hilbert transform \sep random walks \sep Spitzer's identity \sep call and put options 

MSC 2010: Primary 60E10; secondary 44A15, 60G50, 90B22, 91G20 
\end{keyword}

\end{frontmatter}

\section{Introduction}\label{intro}

The positive part, $X_+:=0\vee X=\max(0,X)$, of a random variable (r.v.) $X$ arises in various contexts, including inequalities and extremal problems in probability, statistics, operations research, and finance; see e.g.\ \cite{spitzer,T2,bent-AP,pin-hoeff-published,q-bounds-published} and further references therein. In particular, in finance $(S-K)_+$ is the value of a call option with strike price $K$ when the underlying stock price is $S$. 
Also, the absolute value of a r.v.\ is easily expressible in terms the positive-part operation: $|X|=X_++X_-$, where $X_-:=(-X)_+$. 
Motivated by different needs and using rather different methods,  
\cite{brown70} and \cite{positive} provided various expressions for the power moments $\E X_+^p$ ($p>0$) of the positive part $X_+$ in terms of the Fourier or Fourier--Laplace transform of (the distribution of) $X$. 

However, expressions for the characteristic function (c.f.) $f_{X_+}$ of the r.v.\ $X_+$ in terms of the c.f.\ $f_X$ of an $X$ appear to be absent in the existing literature. 
Here such expressions are provided, as well as related ones. 
We also include applications to stock options and random walks. 
In particular, a more explicit and compact form of Spitzer's identity is obtained. 

\section{Basic results}\label{results}

Let $X$ be any real-valued r.v., and let then $f=f_X$ denote the characteristic function (c.f.) of $X$, so that $f(t)=\E e^{itX}$ for all real $t$. 
Introduce  
\begin{align}
	(J_a f)(t):=&\frac1{2\pi i}\,\int_{-\infty}^\infty e^{-iua}f(t+u)\,\frac{du}u \label{eq:J} \\ 
	=& \frac1{4\pi i}\,\int_{-\infty}^\infty\big[e^{-iua}f(t+u)-e^{iua}f(t-u)\big]\,\frac{du}u. \label{eq:symm}
\end{align}
Here and subsequently, $a,b,s,t,u,x,\al,\be,\ga$ stand for arbitrary real numbers (unless otherwise specified) and the integral $\int_{-\infty}^\infty$ is understood in the principal-value sense, 
so that 
\begin{equation}\label{eq:J_a=}
(J_a f)(t)=\lim_{\vp\downarrow0,\,A\uparrow\infty}(J_{a;\vp,A} f)(t), 	
\end{equation}
where
\begin{equation}\label{eq:J_{a;vp,A}}
	(J_{a;\vp,A} f)(t):=\frac1{2\pi i}\,\int_{\vp,A} e^{-iua}f(t+u)\,\frac{du}u\quad\text{and}\quad
	\int_{\vp,A}:=\int_\vp^A+\int_{-A}^{-\vp}.  
\end{equation}
The equality in \eqref{eq:symm} follows by the change of the integration variable $u\mapsto-u$. 
Because of the singularity of the integrand in \eqref{eq:J} at $u=0$, the expression of $(J_a f)(t)$ in \eqref{eq:symm} will usually be more convenient in computation than the expression of $(J_a f)(t)$ in \eqref{eq:J}. 
By the same change of the integration variable, one has the following parity property of the transformation $J_a$: 
\begin{equation}\label{eq:par}
	(J_a f)(-t)=-(J_{-a}f^-)(t), 
\end{equation}
where $f^-(u):=f(-u)$ for all real $u$. Of course, if $f$ is a c.f., then $f^-=\bar f$, where, as usual, the horizontal bar 
above the symbol denotes the complex conjugation.  

Clearly, one may also write 
\begin{equation*}
	(J_a f)(t)=\frac1{2\pi i}\,\int_{-\infty}^\infty [e^{-iua}f(t+u)-f(t)]\,\frac{du}u. 
\end{equation*}
Moreover, in place of $\frac1{4\pi i}\,\int_{-\infty}^\infty$ in \eqref{eq:symm} one may write 
$\frac1{2\pi i}\,\int_0^\infty$, where $\int_0^\infty:=\lim_{\vp\downarrow0,\,A\uparrow\infty}\int_\vp^A$.

We shall now see that the integral in \eqref{eq:J} always exists -- as long as $f$ is a c.f.; moreover, the value of $(J_a f)(t)$ is no greater than $\frac12$ in modulus. 
In fact, one has the following basic proposition. 

\begin{proposition}\label{prop:J}\ One has 
\begin{equation}\label{eq:J=}
	(J_a f)(t)=\tfrac12\,\E e^{itX}\sign(X-a). 
\end{equation}
	Moreover, for all r.v.'s $X$, all $a$, and all $\vp$ and $A$ such that $0<\vp<A<\infty$  
\begin{equation}\label{eq:J<}
\big|(J_{a;\vp,A} f)(t)\big|<1. 	
\end{equation}
\end{proposition}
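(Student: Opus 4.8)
The plan is to prove both assertions simultaneously by working first with the \emph{regularized} operator $J_{a;\vp,A}$, whose integrand is bounded (by $1/\vp$ in modulus) and supported on the set $\{u:\vp\le|u|\le A\}$ of finite Lebesgue measure. This removes the singularity at $u=0$ and lets me apply Fubini's theorem without fuss. Writing $f(t+u)=\E e^{i(t+u)X}$ and interchanging $\E$ with $\int du$, I obtain
\[
(J_{a;\vp,A} f)(t)=\E\Big[e^{itX}\,g_{\vp,A}(X-a)\Big],\qquad
g_{\vp,A}(c):=\frac1{2\pi i}\int_{\vp,A}e^{iuc}\,\frac{du}u .
\]
After the symmetrizing substitution $u\mapsto-u$ (the same one used in \eqref{eq:symm}), the inner integral collapses to a real Dirichlet-type sine integral,
\[
g_{\vp,A}(c)=\frac1\pi\int_\vp^A\frac{\sin(uc)}u\,du
=\frac{\sign(c)}\pi\int_{\vp|c|}^{A|c|}\frac{\sin w}w\,dw ,
\]
so both statements reduce to properties of $S(y):=\int_0^y\frac{\sin w}w\,dw$.

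Next I would record the elementary facts about $S$. Since $S'(y)=\frac{\sin y}{y}$ is positive on $(0,\pi)$, negative on $(\pi,2\pi)$, and so on with strictly decreasing bumps, $S$ stays nonnegative and attains its global maximum over $[0,\infty)$ at $y=\pi$; thus $0\le S(y)\le S(\pi)$ for all $y\ge0$. Moreover $S(\pi)=\int_0^\pi\frac{\sin w}w\,dw<\int_0^\pi 1\,dw=\pi$, because $\frac{\sin w}w<1$ on $(0,\pi]$. Consequently $|g_{\vp,A}(c)|\le S(\pi)/\pi<1$ uniformly in $c,\vp,A$, and since $|e^{itX}|=1$,
\[
\big|(J_{a;\vp,A}f)(t)\big|\le\E\,\big|g_{\vp,A}(X-a)\big|\le\frac{S(\pi)}\pi<1,
\]
which is exactly \eqref{eq:J<}.

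For \eqref{eq:J=} I would let $\vp\downarrow0$ and $A\uparrow\infty$. Using the classical value $\int_0^\infty\frac{\sin w}w\,dw=\frac\pi2$ together with $S(\vp|c|)\to0$, one gets the pointwise limit $g_{\vp,A}(c)\to\frac12\sign(c)$ for every $c$ (trivially at $c=0$, where both sides vanish). The uniform bound $|g_{\vp,A}|\le S(\pi)/\pi$ supplies a constant, hence $\P$-integrable, dominating function, so dominated convergence applies and yields $(J_af)(t)=\frac12\,\E\,e^{itX}\sign(X-a)$.

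The main obstacle is the passage from the regularized integral to the principal value: one must resist applying Fubini to the singular integral \eqref{eq:J} directly, since the integrand $e^{-iua}f(t+u)/u$ is not jointly absolutely integrable near $u=0$. Carrying out Fubini only on $J_{a;\vp,A}$ and then invoking the uniform Dirichlet-integral bound to legitimize the limit is what keeps the argument clean; the very same bound simultaneously delivers the strict inequality in \eqref{eq:J<}, with the strictness tracing back precisely to $\frac{\sin w}w<1$ on $(0,\pi]$.
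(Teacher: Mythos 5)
Your proof is correct and follows essentially the same route as the paper's: both rest on Fubini's theorem applied to the truncated integral $J_{a;\vp,A}$, the classical Dirichlet sine-integral facts (maximum of $\int_0^y\frac{\sin w}{w}\,dw$ at $y=\pi$, with value strictly less than $\pi$, and limit $\pi/2$), and a passage to the limit protected by the resulting uniform bound. The only difference is one of presentation: the paper first computes the point-mass case and then invokes Fubini, compressing the dominated-convergence step that you spell out explicitly.
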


\begin{proof}[Proof of Proposition~\ref{prop:J}]
Consider first the (extreme) case when the real-valued r.v.\ $X$ takes only one value, say $x$. 
Then $f(t)=e^{itx}$ and hence 
\begin{align*}
	(J_a f)(t)=&\frac{e^{itx}}{2\pi i}\,\int_{-\infty}^\infty e^{iu(x-a)}\,\frac{du}u 
	=\frac{e^{itx}}{2\pi}\,\int_{-\infty}^\infty\frac{\sin u(x-a)}u\,du
	=\frac12\,e^{itx}\sign(x-a),  
\end{align*}
so that \eqref{eq:J=} holds in this case. Similarly, still in this same extreme case, 
\begin{align*}
	2\pi\,|(J_{a;\vp,A}f)(t)|=&\Big|\int_{\vp,A}\frac{\sin u(x-a)}u\,du\Big|
	\le\int_{-\pi}^\pi\frac{\sin v}v\,dv<\int_{-\pi}^\pi\,dv=2\pi,   
\end{align*}
so that \eqref{eq:J<} holds in this case as well. 
It remains to use Fubini's theorem to obtain \eqref{eq:J=} and \eqref{eq:J<} in general. 
\end{proof}

Consider the r.v. 
\begin{equation*}
	X_+:=0\vee X=\max(0,X), 
\end{equation*}
the positive part of the r.v.\ $X$. In view of the obvious identity 
\begin{equation}\label{eq:^{itX_+}=}
	2e^{itX_+}=1+e^{itX}+e^{itX}\sign X-\sign X, 
\end{equation}
one immediately obtains the following corollary of Proposition~\ref{prop:J}, which gives an expression of the c.f.\ of $X_+$ in terms of the c.f.\ of $X$. 

\begin{corollary}\label{cor:X_+}\  
\begin{equation}\label{eq:X_+,c.f.}
	\E e^{itX_+}=\tfrac12\,[1+f(t)]+(Jf)(t)-(Jf)(0).  
\end{equation}
\end{corollary}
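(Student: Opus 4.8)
The plan is to take expectations on both sides of the pointwise identity \eqref{eq:^{itX_+}=} and then to recognize each resulting term as either the c.f.\ $f$ or a value of the transform $J$ supplied by Proposition~\ref{prop:J}. First I would confirm that \eqref{eq:^{itX_+}=} holds for every real value of $X$ by checking the three cases $X>0$, $X<0$, and $X=0$ (using $\sign 0=0$ in the last one); this is the only genuinely ad hoc step, and it is entirely routine. Note also that all the expectations that will appear are finite without any moment hypothesis, since $|e^{itX}\sign X|\le1$ and $|\sign X|\le1$, so the boundedness of $X_+$'s exponential term poses no integrability issue.

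Taking $\E$ of both sides of \eqref{eq:^{itX_+}=} and using linearity together with $\E e^{itX}=f(t)$, I would obtain
\begin{equation*}
2\,\E e^{itX_+}=1+f(t)+\E e^{itX}\sign X-\E\sign X.
\end{equation*}
The two remaining expectations are precisely the quantities controlled by Proposition~\ref{prop:J} in the case $a=0$: writing $J:=J_0$, the identity \eqref{eq:J=} at $a=0$ gives $\E e^{itX}\sign X=2(Jf)(t)$, while the same identity evaluated at the argument $0$ (so that $e^{i0\cdot X}=1$) gives $\E\sign X=\E e^{i0\cdot X}\sign X=2(Jf)(0)$.

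Substituting these two expressions and dividing by $2$ then yields \eqref{eq:X_+,c.f.} at once. There is no real obstacle here: all the analytic content — existence of the principal-value integral defining $J_a f$ and its evaluation as $\tfrac12\,\E e^{itX}\sign(X-a)$ — has already been discharged in Proposition~\ref{prop:J}, so that the corollary is a purely algebraic consequence of it once the elementary identity \eqref{eq:^{itX_+}=} is in hand.
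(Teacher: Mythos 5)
Your proof is correct and follows exactly the paper's route: the paper derives Corollary~\ref{cor:X_+} "immediately" by taking expectations in the pointwise identity \eqref{eq:^{itX_+}=} and invoking \eqref{eq:J=} of Proposition~\ref{prop:J} with $a=0$ at the arguments $t$ and $0$, which is precisely what you do (you merely spell out the case check of \eqref{eq:^{itX_+}=} that the paper calls obvious).
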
 
Here and in what follows, we set 
\begin{equation}\label{eq:J=J_0}
	J:=J_0, 
\end{equation}
for brevity. 
One may note that 
\begin{equation*}
	J=\tfrac i2\,H, 
\end{equation*}
where $H$ is the Hilbert transform, 
given by the formula 
\begin{equation*}
	(Hf)(t):=\frac1\pi\,\int_{-\infty}^\infty\frac{f(s)\,ds}{t-s}
	=-\frac1\pi\,\int_0^\infty[f(t+u)-f(t-u)]\,\frac{du}u. 
\end{equation*}
A comprehensive treatment of theoretical properties, applications, and special instances of the Hilbert transform is presented in \cite{king_hilbert1,king_hilbert2}. 

\begin{remark}\label{rem:2 to 1}
The difference $(Jf)(t)-(Jf)(0)$ in \eqref{eq:X_+,c.f.} can be rewritten as one integral: 
\begin{equation*}
	2\pi i\,[(Jf)(t)-(Jf)(0)]=\int_{-\infty}^\infty[f(t+u)-f(u)]\,\frac{du}u, 
\end{equation*}
of course understood in general in the principal-value sense. 
\end{remark}

Formulas such as \eqref{eq:X_+,c.f.} will be especially useful when the c.f.\ $f$ of $X$ can be expressed more readily than (say) the density of $X$ -- which will usually be the case when e.g.\ $X$ is the sum of independent r.v.'s; cf. e.g.\ \cite{nonunif,pin-hoeff-published}. 
Also, because both sides of identities such as \eqref{eq:X_+,c.f.} are affine in $f$ and hence in the distribution of $X$, these identities will be suitable when the distribution of $X$ is presented as the mixture of simpler distributions. 

Another kind of application of identity \eqref{eq:X_+,c.f.} will be given in 
Subsection~\ref{spitzer} of the present note, where a more explicit and compact form of Spitzer's identity is obtained. 

The following corollary is a generalization of Corollary~\ref{cor:X_+}, providing an expression of the joint c.f.\ of $X$, $X_+$, and $X_-:=(-X)_+$ in terms of $f$. 

\begin{corollary}\label{cor:X,X_+,X_-}\  
\begin{equation}\label{eq:X,X_+,X_-}
	\E e^{i(\al X+\be X_++\ga X_-)}=\tfrac12\,[f(\al+\be)+f(\al-\ga)]+(Jf)(\al+\be)-(Jf)(\al-\ga).  
\end{equation}
\end{corollary}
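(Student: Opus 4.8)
The plan is to reduce the joint characteristic function to a sum of applications of Proposition~\ref{prop:J}, exactly as Corollary~\ref{cor:X_+} was obtained from the identity \eqref{eq:^{itX_+}=}. First I would observe that $X_+$ and $X_-=(-X)_+$ are governed by the sign of $X$ at the single threshold $0$: on $\{X\ge 0\}$ one has $X_+=X$ and $X_-=0$, while on $\{X<0\}$ one has $X_+=0$ and $X_-=-X$. Hence the exponent $\al X+\be X_++\ga X_-$ collapses, on each half-line, to a linear function of $X$ alone: it equals $(\al+\be)X$ when $X\ge 0$ and $(\al-\ga)X$ when $X<0$. This is the structural observation that makes the two distinct ``effective frequencies'' $\al+\be$ and $\al-\ga$ appear on the right-hand side of \eqref{eq:X,X_+,X_-}.

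Next I would write an algebraic identity analogous to \eqref{eq:^{itX_+}=}, splitting the random exponential according to the indicator of $\{X\ge 0\}$ versus $\{X<0\}$. Concretely, using $\operatorname{I}\{X\ge0\}=\tfrac12(1+\sign X)$ and $\operatorname{I}\{X<0\}=\tfrac12(1-\sign X)$ (with the harmless convention at $X=0$, which carries zero probability mass into the $X_-$ term since $X_+=X_-=0$ there), I would record
\begin{equation*}
2e^{i(\al X+\be X_++\ga X_-)}
=e^{i(\al+\be)X}\big(1+\sign X\big)+e^{i(\al-\ga)X}\big(1-\sign X\big).
\end{equation*}
Taking expectations term by term then produces $f(\al+\be)+f(\al-\ga)$ from the two ``$1$'' terms and $\E e^{i(\al+\be)X}\sign X-\E e^{i(\al-\ga)X}\sign X$ from the two ``$\sign X$'' terms.

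Finally I would invoke Proposition~\ref{prop:J} with $a=0$, i.e.\ the equality $\tfrac12\,\E e^{itX}\sign X=(Jf)(t)$ from \eqref{eq:J=} together with the abbreviation $J=J_0$ in \eqref{eq:J=J_0}, applied once at $t=\al+\be$ and once at $t=\al-\ga$. Dividing by $2$ then yields precisely \eqref{eq:X,X_+,X_-}. Specializing $\al=\ga=0$, $\be=t$ recovers Corollary~\ref{cor:X_+}, which is a useful consistency check on the signs and on the $(Jf)(0)$ term.

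The only genuine subtlety—hardly an obstacle—is the treatment of the atom at $X=0$: there $\sign X$ must be given a definite value, but since both $X_+$ and $X_-$ vanish there, the contribution $\tfrac12 e^{i\al\cdot 0}(1\pm\sign 0)$ sums to $e^{0}=1$ regardless of the chosen value of $\sign 0$, so the identity is insensitive to the convention. Everything else is the same Fubini/principal-value justification already established for Proposition~\ref{prop:J}, so no new analytic input is required.
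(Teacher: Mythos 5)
Your proof is correct and is essentially the paper's own argument: the paper multiplies the identity \eqref{eq:^{itX_+}=} term-wise by $e^{isX}$, uses $\al X+\be X_++\ga X_-=(\al-\ga)X+(\be+\ga)X_+$, and applies \eqref{eq:J=}, which after regrouping is exactly your decomposition $2e^{i(\al X+\be X_++\ga X_-)}=e^{i(\al+\be)X}(1+\sign X)+e^{i(\al-\ga)X}(1-\sign X)$ followed by the same two applications of Proposition~\ref{prop:J} at the frequencies $\al+\be$ and $\al-\ga$. The only cosmetic difference is that you re-derive this sign identity directly from the case analysis on $\{X\ge0\}$ versus $\{X<0\}$ instead of recycling \eqref{eq:^{itX_+}=}.
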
 

\begin{proof}[Proof of Corollary~\ref{cor:X,X_+,X_-}]
Note that $\al X+\be X_++\ga X_-=(\al-\ga)X+(\be+\ga)X_+$. 
Now multiply identity \eqref{eq:^{itX_+}=} term-wise by $e^{isX}$, then use the identity $e^{isX}e^{itX}=e^{i(s+t)X}$, then replace $s$ and $t$ respectively by $\al-\ga$ and $\be+\ga$, and finally use \eqref{eq:J=} and \eqref{eq:J=J_0}. 
This will yield \eqref{eq:X,X_+,X_-}. 
\end{proof}

With $\al=0$ and $\be=\ga=t$, identities \eqref{eq:X,X_+,X_-} and \eqref{eq:par} allow one to immediately express the c.f.\ of the r.v.\ $|X|=X_++X_-$ in terms 
of $f$: 
\begin{equation}\label{eq:=H,abs}
	\E e^{it|X|}
	=\Re f(t)+2\,(J\Re f)(t)=\Re f(t)+2\,\Re(Jf)(t).  
\end{equation}

Take now any real $a$ and $b$ such that $a\le b$ and introduce the r.v. 
\begin{equation*}
	X_{a,b}:=a\vee(b\wedge X),  
\end{equation*}
so that all the values of $X_{a,b}$ are in the interval $[a,b]$. 
Then, 
using the identity 
\begin{equation*}
	2e^{itX_{a,b}}=e^{i t a}+e^{i t b}+(e^{i t X}-e^{i t a})\sign(X-a)+(e^{i t b}-e^{i t X})\sign(X-b), 
\end{equation*}
similarly to Corollary~\ref{cor:X_+} one obtains 
the following expression of the c.f.\ of $X_{a,b}$ in terms of the c.f.\ of $X$. 

\begin{corollary}\label{cor:X_ab}\  
\begin{equation*}
	\E e^{itX_{a,b}}=\tfrac12\,(e^{i t a}+e^{i t b})+(J_a f)(t)-e^{i t a}(J_a f)(0)+e^{i t b}(J_b f)(0)-(J_b f)(t). 
\end{equation*}
\end{corollary}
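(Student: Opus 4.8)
The plan is to follow the template already used for Corollary~\ref{cor:X_+}: first establish the stated deterministic identity pointwise (i.e., for each fixed value of $X$), then take expectations and convert each resulting term of the form $\E e^{itX}\sign(X-c)$ into a value of $J_c f$ by means of Proposition~\ref{prop:J}.

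First I would verify the identity
\[
	2e^{itX_{a,b}}=e^{ita}+e^{itb}+(e^{itX}-e^{ita})\sign(X-a)+(e^{itb}-e^{itX})\sign(X-b)
\]
by case analysis on the location of $X$ relative to $a$ and $b$. When $X<a$, both signs equal $-1$ and $X_{a,b}=a$; when $a\le X\le b$, one has $\sign(X-a)=+1$ and $\sign(X-b)=-1$, with $X_{a,b}=X$; when $X>b$, both signs equal $+1$ and $X_{a,b}=b$. In each case the right-hand side collapses to $2e^{itX_{a,b}}$. The boundary values $X=a$ and $X=b$ cause no difficulty, since there the factor $\sign(X-c)$ multiplies $e^{itX}-e^{itc}$, which vanishes precisely when $X=c$, so the convention chosen for $\sign 0$ is immaterial.

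Next I would take the expectation of both sides. The constant terms contribute $e^{ita}+e^{itb}$, and each of the two remaining terms splits into a piece $\E e^{itX}\sign(X-c)$ and a piece $e^{itc}\,\E\sign(X-c)$ for $c\in\{a,b\}$. Applying \eqref{eq:J=}, which gives $(J_c f)(t)=\tfrac12\,\E e^{itX}\sign(X-c)$ and, at $t=0$, $\E\sign(X-c)=2(J_c f)(0)$, turns these pieces into $2(J_c f)(t)$ and $2e^{itc}(J_c f)(0)$ respectively. Dividing by $2$ then yields the asserted formula: the $a$-term produces $+(J_a f)(t)-e^{ita}(J_a f)(0)$, while the $b$-term, whose leading factor is $e^{itb}-e^{itX}$ rather than $e^{itX}-e^{ita}$, reverses the roles and contributes $+e^{itb}(J_b f)(0)-(J_b f)(t)$.

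I expect no genuine obstacle here: all the analytic content, including the very existence of each $J_c f$ as a principal-value integral, is supplied by Proposition~\ref{prop:J}, which has already been proved. The only point demanding care is the sign bookkeeping, namely keeping the two $\sign$ terms with their opposite orientations straight so that the expanded expectation is matched correctly to the four $J$-terms appearing in the statement.
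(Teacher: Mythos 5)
Your proof is correct and follows essentially the same route as the paper: the paper derives Corollary~\ref{cor:X_ab} from exactly the pointwise identity you verify, then takes expectations and converts each term $\E e^{itX}\sign(X-c)$ and $\E\sign(X-c)$ into $(J_c f)(t)$ and $2(J_c f)(0)$ via Proposition~\ref{prop:J}, ``similarly to Corollary~\ref{cor:X_+}.'' Your explicit case analysis and the remark on the boundary values $X=a$, $X=b$ simply fill in details the paper leaves to the reader.
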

A remark similar to Remark~\ref{rem:2 to 1} can of course be made concerning Corollary~\ref{cor:X_ab}. 


\section{Applications}\label{appls}

\subsection{Joint c.f.\ of the prices of the stock, call option, and put option}
In finance, $C:=(S-K)_+$ and $P:=(K-S)_+=(S-K)_-$ are the respective values of a call option and a put option with strike price $K$ when the price of the underlying stock is $S$. It follows immediately from \eqref{eq:X,X_+,X_-} that the joint c.f.\ in the title of this subsection is given by the formula
\begin{equation*}
	\E e^{i(\al S+\be C+\ga P)} 
	=e^{itK}\,
	\big\{\tfrac12\,[f_K(\al+\be)+f_K(\al-\ga)]+(Jf_K)(\al+\be)-(Jf_K)(\al-\ga)\big\},   
\end{equation*} 
where $f_K$ is the c.f.\ of the r.v.\ $S-K$, so that $f_K(t)=e^{-itK}\E e^{itS}$. 

\subsection{Random walks and Spitzer's identity}\label{spitzer}
Let $X,X_1,X_2,\dots$ be any independent identically distributed (iid) r.v.'s. For all $n=0,1,\dots$, let $S_n:=\sum_{j=1}^n X_j$; in particular, by the standard convention about the sum of an empty family, $S_0=0$; let also 
\begin{equation*}
	M_n:=\max_{0\le k\le n}S_k 
\end{equation*}
and 
\begin{equation*}
	\vpi_n(s,t):=\E\exp\{i[sM_n+t(M_n-S_n)]\}, 
\end{equation*}
which latter determines the joint c.f.\ of the pair $(M_n,S_n)$ -- cf.\ formula~(6.2) in \cite{spitzer}. 

Consider also the Markov process $(T_n)$ defined recursively by the conditions  
\begin{equation}\label{eq:T_n}
	T_0=0\quad\text{and}\quad T_n=(T_{n-1}+X_n)_+\text{ for $n=1,2,\dots$.} 
\end{equation}
It is well known that formula \eqref{eq:T_n} describes a large class of 
queuing systems; 
see e.g.\ 
\cite[Ch.\ IV, Section~2, formula~(1)]{borovk_queus}.  
It is also well known (see e.g.\ page~155 in \cite{spitzer_TAMS60}) that $M_n$ has the same distribution as $T_n$, for each $n=0,1,\dots$. 

Therefore, letting $f_n$ stand for the c.f.\ of $M_n$, by \eqref{eq:X_+,c.f.} one has the following recurrence: 
\begin{equation*}\label{eq:f_n}
	f_0=1\quad\text{and}\quad f_n=\tfrac12\,(1+f_{n-1}f)+J(f_{n-1}f)-J(f_{n-1}f)(0)\text{ for $n=1,2,\dots$.}  
\end{equation*}

Similarly, take any real $a$, $b$, $x$ such that $a\le x\le b$ and 
consider the Markov process $(U_n)=(U_{a,b,x;n})$ defined recursively by the conditions  
\begin{equation*}
	U_0=x\quad\text{and}\quad U_n=a\vee\big(b\wedge(U_{n-1}+X_n)\big)\text{ for $n=1,2,\dots$.} 
\end{equation*}
Thus, $(U_n)$ is a random walk on the interval $[a,b]$, with barriers at $a$ and $b$. 
Then, letting $g_n=g_{a,b,x;n}$ stand for the c.f.\ of $U_n$, by Corollary~\ref{cor:X_ab} one has the following recurrence: $g_0=e^{ix\cdot}$ and, for $n=1,2,\dots$, 
\begin{equation*}\label{eq:g_n}
	g_n=
	\tfrac12\,(e^{ia\cdot}+e^{ib\cdot})+J_a(g_{n-1}f)-e^{ia\cdot}J_a(g_{n-1}f)(0)+e^{ib\cdot}J_b(g_{n-1}f)(0)-J_b(g_{n-1}f). 
\end{equation*}

\begin{center}
	***
\end{center}

A general form of the famous Spitzer identity was given in Theorem~6.1 in \cite{spitzer}:  
\begin{equation}\label{eq:spitzer}
	\sum_{n=0}^\infty\vpi_n(s,t)z^n=\exp\sum_{k=1}^\infty\frac{z^k}k\,[\psi_k(s)+\th_k(t)-1]; 
\end{equation}
here and in what follows, $z$ is any complex number with $|z|<1$ and 
\begin{equation}\label{eq:psi,th}
	\psi_k(s):=\E\exp\{is(S_k)_+\}\quad\text{and}\quad\th_k(t):=\E\exp\{it(S_k)_-\}. 
\end{equation}
Spitzer's identity \eqref{eq:spitzer} and especially its special case with $t=0$ have been used in large number of papers, mainly for asymptotic analysis using Tauberian types theorems. 

Spitzer's identity reduces the power-series transform of the ``difficult'' joint c.f.'s $\vpi_n$ of the pairs $(M_n,M_n-S_n)$
in the left-hand side of \eqref{eq:spitzer} to a power-series transform of ``easier'' c.f.'s $\psi_k$ and $\th_k$ of $(S_k)_+$ and $(S_k)_-$, the positive and negative parts of the sums $S_k=\sum_{j=1}^k X_j$ of the iid r.v.'s $X_j$. 


Using Corollary~\ref{cor:X_+} of the present note, we shall provide more explicit and compact integral expressions of the 
left-hand side of \eqref{eq:spitzer} in terms of just the c.f.\ $f$ of the r.v.\ $X$ (which was assumed to be identical in distribution to each $X_j$). 
Indeed, by \eqref{eq:X_+,c.f.}, 
\begin{equation}\label{eq:psi}
	\psi_k(s)=\tfrac12\,[1+f(s)^k]+(Jf^k)(s)-(Jf^k)(0), 
\end{equation}
where, as before, $f$ is the c.f.\ of $X$, and $Jf^k:=J(f^k)$. 
Next,
\begin{align}
	\sum_{k=1}^\infty\frac{z^k}k\,(Jf^k)(s)=&\sum_{k=1}^\infty\frac{z^k}k\,\lim_{\vp,A}(J_{\vp,A}f^k)(s) \label{eq:sum lim} \\ 
	=&\lim_{\vp,A}\sum_{k=1}^\infty\frac{z^k}k\,(J_{\vp,A}f^k)(s) \label{eq:lim sum} \\ 
	=&\lim_{\vp,A}\Big(J_{\vp,A}\sum_{k=1}^\infty\frac{z^k}k\,f^k\Big)(s)
	=\Big(J\ln\frac1{1-zf}\Big)(s); \label{eq:lim J sum} 
\end{align}
here $\lim_{\vp,A}:=\lim_{\vp\downarrow0,A\uparrow\infty}$ and $J_{\vp,A}:=J_{0;\vp,A}$. 
The equality in \eqref{eq:sum lim} and the second equality in \eqref{eq:lim J sum} follow immediately by the definition \eqref{eq:J=J_0} of $J=J_0$ and formula \eqref{eq:J_a=} for $J_a$. 
The equality in \eqref{eq:lim sum} follows by dominated convergence, in view of \eqref{eq:J<} and the condition $|z|<1$. 
Finally, in view of definitions $J_{\vp,A}:=J_{0;\vp,A}$ and \eqref{eq:J_{a;vp,A}}, the first equality in \eqref{eq:lim J sum} follows by the linearity of the operator $J_{\vp,A}$ and another instance of dominated convergence, because for each pair $(\vp,A)\in(0,\infty)^2$ with $\vp<A$ and all $u\in[-A,-\vp]\cup[\vp,A]$ one has 
\begin{equation*}
	\Big|\frac1u\,\sum_{k=1}^n\frac{z^k}k\,f(u)^k\Big|\le\frac1\vp\,\sum_{k=1}^n\frac{|z|^k}k
	\le\frac1\vp\,\ln\frac1{1-|z|}, 
\end{equation*}
for all natural $n$. 

The second equality in \eqref{eq:lim J sum} requires the following comment: Whereas the original definition of $J_a f$ in \eqref{eq:J} was stated only for the case when $f$ is a c.f., it obviously remains valid for any continuous function $f\colon\R\to\CC$, as long as the limit in \eqref{eq:J_a=} exists. In view of \eqref{eq:J=J_0}, this clarifies the meaning of the expression $\big(J\ln\frac1{1-zf}\big)(s)$ in \eqref{eq:lim J sum}; moreover, it follows from the proof given in the previous paragraph that the limit $\big(J\ln\frac1{1-zf}\big)(s)$ exists (and is finite).   

By \eqref{eq:psi} and \eqref{eq:sum lim}--\eqref{eq:lim J sum}, 
\begin{equation}\label{eq:sum psi}
	\sum_{k=1}^\infty\frac{z^k}k\,[\psi_k(s)-\tfrac12]
	=\frac12\,\ln\frac1{1-zf(s)}+\Big(J\ln\frac1{1-zf}\Big)(s)-\Big(J\ln\frac1{1-zf}\Big)(0). 
\end{equation}
By \eqref{eq:psi,th}, identity \eqref{eq:sum psi} will hold if $\psi_k$ and $f$ are replaced there by $\th_k$ and $\bar f$, respectively. So, recalling the definitions \eqref{eq:J=J_0} and \eqref{eq:J} of $J=J_0$ and $J_a$, one sees that the right-hand side of Spitzer's identity \eqref{eq:spitzer} is equal to \break   
$[(1-zf(s))
(1-z\bar f(t))]^{-1/2}\exp\J$, where  
\begin{equation}\label{eq:JJ:=}
	\J:=\frac1{2\pi i}\,\int_{-\infty}^\infty\frac{du}u\,\Big(\ln\frac{1-zf(u)}{1-zf(s+u)}
	+\ln\frac{1-z\bar f(u)}{1-z\bar f(t+u)}\Big). 
\end{equation}
Here we used the identities $\ln\frac1{1-zf(s+u)}-\ln\frac1{1-zf(u)}=\ln\frac{1-zf(u)}{1-zf(s+u)}$ and \break 
$\ln\frac1{1-z\bar f(t+u)}-\ln\frac1{1-z\bar f(u)}=\ln\frac{1-z\bar f(u)}{1-z\bar f(t+u)}$, which are true because \break 
$\arg(1-zf(s+u))\in(-\frac\pi2,\frac\pi2)$ and hence $\arg\frac{1-zf(u)}{1-zf(s+u)}\in(-\pi,\pi)$ for any c.f.\ $f$, given that $|z|<1$ and $s$ and $u$ are in $\R$. However, in general one may not be able to combine the two logarithms in \eqref{eq:JJ:=} into one, since the equality $\ln z_1+\ln z_2=\ln(z_1z_2)$ fails to hold for some nonzero complex $z_1$ and $z_2$. 

Using again the change  of the integration variable $u\mapsto-u$ (cf.\ \eqref{eq:symm}), one can finally rewrite \eqref{eq:spitzer} as follows. 

\begin{corollary}\label{cor:spitzer}
\begin{equation}\label{eq:spitzer-rewr}
\begin{multlined}
	\sum_{n=0}^\infty\vpi_n(s,t)z^n=
	\frac1{\sqrt{(1-zf(s))(1-zf(-t))}} \\ 
	\,\exp\Big\{
	\frac1{4\pi i}\,\int_{-\infty}^\infty\frac{du}u\,\Big(
	\ln\frac{1-zf(s-u)}{1-zf(s+u)}
	+\ln\frac{1-zf(-t+u)}{1-zf(-t-u)}\Big)
	\Big\}. 
\end{multlined}	
\end{equation} 
In the special case $t=0$, \eqref{eq:spitzer-rewr} yields 
\begin{equation}\label{eq:spitzer-rewr,M}
\begin{multlined}
	\sum_{n=0}^\infty\E e^{isM_n}z^n=
	\frac1{\sqrt{(1-zf(s))(1-z)}} \\ 
	\,\exp\Big\{
	\frac1{4\pi i}\,\int_{-\infty}^\infty\frac{du}u\,\Big(
	\ln\frac{1-zf(s-u)}{1-zf(s+u)}
	+\ln\frac{1-zf(u)}{1-zf(-u)}\Big)
	\Big\}. 
\end{multlined}	
\end{equation} 
\end{corollary}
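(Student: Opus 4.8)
The plan is to finish the computation begun in the paragraphs preceding the statement. There it was established that the right-hand side of Spitzer's identity \eqref{eq:spitzer} equals $[(1-zf(s))(1-z\bar f(t))]^{-1/2}\exp\J$, with $\J$ as in \eqref{eq:JJ:=}; what remains is purely to recast $\J$ and the prefactor into the conjugate-free, symmetric shape of \eqref{eq:spitzer-rewr}. First I would invoke the fact that $f$ is a characteristic function, so that $\bar f=f^-$, i.e.\ $\bar f(w)=f(-w)$ for all real $w$. Applying this to \eqref{eq:JJ:=} turns the second logarithm into $\ln\frac{1-zf(-u)}{1-zf(-t-u)}$ and turns the factor $1-z\bar f(t)$ in the prefactor into $1-zf(-t)$, so the prefactor already agrees with that in \eqref{eq:spitzer-rewr}.

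The key step is then the symmetrization of the principal-value integral via the substitution $u\mapsto-u$ (exactly as in the passage from \eqref{eq:J} to \eqref{eq:symm}), using $\int_{-\infty}^\infty\frac{g(u)}u\,du=\frac12\int_{-\infty}^\infty\frac{g(u)-g(-u)}u\,du$. Writing the bracket in \eqref{eq:JJ:=} as $A(u)+B(u)$ with $A(u)=\ln\frac{1-zf(u)}{1-zf(s+u)}$ and $B(u)=\ln\frac{1-zf(-u)}{1-zf(-t-u)}$, I would split each quotient-logarithm into the difference of the logarithms of its numerator and denominator; this is legitimate here because every $1-zf(\cdot)$ lies in the open right half-plane (as $|z|<1$ forces $\arg(1-zf(\cdot))\in(-\tfrac\pi2,\tfrac\pi2)$), so the principal determinations are single-valued and behave additively. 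Forming $(A(u)+B(u))-(A(-u)+B(-u))$, one sees that the two ``reference'' terms $\ln(1-zf(u))$ and $\ln(1-zf(-u))$ — precisely the ones produced by the earlier subtraction of $\big(J\ln\tfrac1{1-zf}\big)(0)$ — cancel in pairs, leaving exactly $\ln\frac{1-zf(s-u)}{1-zf(s+u)}+\ln\frac{1-zf(-t+u)}{1-zf(-t-u)}$. Together with the factor $\tfrac12$ from symmetrization this converts $\tfrac1{2\pi i}$ into $\tfrac1{4\pi i}$ and yields \eqref{eq:spitzer-rewr}. I note in passing that the symmetrized integrand vanishes to first order at $u=0$, so the singularity there is removed and the principal value is an ordinary convergent integral near the origin.

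Finally, the special case $t=0$ follows by direct substitution: $\vpi_n(s,0)=\E e^{isM_n}$, while $f(-t)=f(0)=1$ makes the prefactor $[(1-zf(s))(1-z)]^{-1/2}$ and the second logarithm $\ln\frac{1-zf(u)}{1-zf(-u)}$, which is \eqref{eq:spitzer-rewr,M}. The one genuine analytic obstacle — interchanging $\sum_k$ with the principal-value limit defining $J$ — has already been overcome in the lead-up by dominated convergence together with the uniform bound \eqref{eq:J<}; so the only remaining delicate point is the bookkeeping of logarithm branches. The cancellation above is valid precisely because all the relevant values $1-zf(\cdot)$ stay in the right half-plane, so that splitting and recombining the principal logarithms additively (never as $\ln z_1+\ln z_2=\ln z_1z_2$, which may fail) introduces no spurious multiples of $2\pi i$.
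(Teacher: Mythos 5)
Your proof is correct and follows essentially the same route as the paper: the paper likewise obtains the corollary from the already-established identity with prefactor $[(1-zf(s))(1-z\bar f(t))]^{-1/2}$ and exponent \eqref{eq:JJ:=}, by applying $\bar f(w)=f(-w)$ and the change of variable $u\mapsto-u$ (cf.\ \eqref{eq:symm}), with exactly the same right-half-plane argument ($\arg(1-zf(\cdot))\in(-\tfrac\pi2,\tfrac\pi2)$) to justify splitting and recombining the principal logarithms; your write-up merely makes the cancellation and the factor $\tfrac12$ explicit. One small caveat: your passing remark that the symmetrized integrand ``vanishes to first order'' at $u=0$ is unwarranted (a c.f.\ need not be differentiable), but nothing in the proof depends on it, since existence of the principal value was already secured by the dominated-convergence argument in the lead-up.
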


Identities somewhat similar to \eqref{eq:spitzer-rewr} and \eqref{eq:spitzer-rewr,M} were obtained by \cite{spitzer_duke57,spitzer_TAMS60} and \cite{borovk_SMZh62,borovk_TVP70}. In particular, Theorem~3 in \cite{spitzer_duke57} expresses the left-hand side of \eqref{eq:spitzer-rewr,M} in terms of (a double integral of a rational function of) $f$ -- under the condition the function $t\mapsto(1-f(t))/t$ be integrable in a neighborhood of $0$. As mentioned in \cite{spitzer_duke57}, earlier a result close to Theorem~3 in \cite{spitzer_duke57} was obtained by \cite{pollaczek} (under the condition that $f$ is analytic in a neighborhood of $0$). The method in \cite{spitzer_duke57} was based on a general inversion formula due to \cite{hewitt53}. 
Developing these results further, 
Theorems~5.2 A and 5.2 B in \cite{spitzer_TAMS60} present, in the case when the distribution of $X$ is symmetric and either continuous or lattice, an expression in terms of the c.f.\ $f$ of a limit of the left-hand side of Spitzer's identity as $z\uparrow1$, but for the Laplace transform (rather than the Fourier one).  
In the case when $\E|X|<\infty$ and the distribution of $X$ is either lattice or with a nontrivial absolutely continuous component, 
formula (36) in \cite{borovk_TVP70} formula presents an expression in terms of $f$ for the ``positive'' component $w_+$ of the so-called $V$-factorization of the function $1-f$ and hence for the Laplace--Fourier transform of the distribution of the r.v.\ $M_\infty:=\lim_{n\to\infty}M_n$, provided that $M_\infty<\infty$ almost surely. 
In comparison, recall here that identities \eqref{eq:spitzer-rewr} and \eqref{eq:spitzer-rewr,M} hold without any restrictions on the distribution of $X$; also, the methods presented here differ from those in the mentioned papers.  

\bibliographystyle{abbrv}

\bibliography{C:/Users/ipinelis/Dropbox/mtu/bib_files/citations12.13.12}


\end{document}